\newtheorem{theorem}{Theorem}[section]
\newtheorem{lemma}[theorem]{Lemma}
\numberwithin{equation}{section}
\def\pput(#1,#2)#3{\noindent\smash{\raise#2pt\hbox to 0pt
   {\kern #1pt #3\hss}}\ignorespaces}
\begin{document}

\title[Approximation of \boldmath $x^n$]
{Rational approximation of \boldmath $x^n$}

\author[Nakatsukasa]{Yuji Nakatsukasa}
\address{Mathematical Institute, University of Oxford, Oxford, OX2 6GG, UK}
\email{nakatsukasa@maths.ox.ac.uk}
\author[Trefethen]{Lloyd N. Trefethen}
\address{Mathematical Institute, University of Oxford, Oxford, OX2 6GG, UK}
\email{trefethen@maths.ox.ac.uk}

\subjclass[2010]{41A20}

\commby{}


\begin{abstract}
Let $E_{kk}^{(n)}$ denote the minimax (i.e., best supremum
norm) error in approximation of $x^n$ on $[\kern .3pt 0,1]$
by rational functions of type $(k,k)$ with $k<n$.  We show
that in an appropriate limit $E_{kk}^{(n)} \sim 2\kern .3pt
H^{k+1/2}$ independently of $n$,
where $H \approx 1/9.28903$ is Halphen's constant.
This is the same formula as for minimax approximation of \kern
.7pt $e^x$ on $(-\infty,0\kern .3pt]$.
\end{abstract}

\maketitle

\section{Introduction}

We consider minimax approximation of $x^n$ on $[\kern .3pt 0,1]$,
that is, best approximation with respect to the supremum norm
$\|\cdot \|$ on $[\kern .3pt 0,1]$.  Although $n$ is usually
thought of as an integer, we permit it to be any nonnegative
real number.  If $n$ is an even integer,
approximation of $x^n$ on 
$[-1,1]$ is equivalent to approximation
of $x^{n/2}$ on $[\kern .3pt 0, 1]$, and results will be
stated for both intervals.

For each integer $k\ge 0$, there is a unique minimax approximant
$p_k^{(n)}$ of $x^n$ among polynomials of degree at most $k$~\cite{atap}.
Let $E_k^{(n)} = \|x^n - p_k^{(n)}\|$ denote the associated error, which
will be nonzero whenever $k<n$.
In 1976 Newman and Rivlin~\cite{newman} 
published theorems showing
\begin{equation}
E_k^{(n)} \approx \textstyle{\frac{1}{2}}\kern .5pt
\hbox{erfc}(k/\sqrt{n}\kern 1.2pt),
\label{newriv}
\end{equation}
where $\hbox{erfc}(s) = 2\kern .3pt \pi^{-1/2}\int_s^\infty
\exp(-t^2)\kern .5pt dt$ is the complementary error function. 
(The constant $1/2$ is our own, based on numerical experiments.)
This formula implies
that a degree $k = O(\sqrt n\kern 1pt)$ suffices for polynomial
approximation of $x^n$ to high accuracy.  To illustrate this
effect, Figure~\ref{fig1} plots $E_k^{(n)}$ against $k^2$ for
the cases $n = 250$ and $1000$, showing good agreement with
(\ref{newriv}).  The data in our two figures have been computed
with the {\tt minimax} command in Chebfun~\cite{chebfun,minimax}.

\begin{figure}
\begin{center}
\includegraphics[scale=.8]{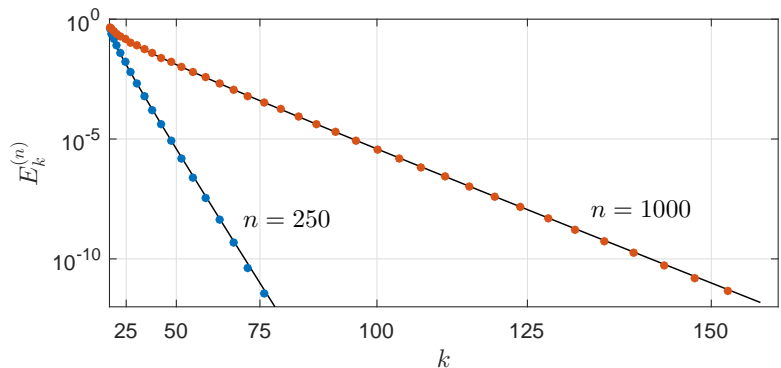}
\end{center}
\caption{\label{fig1} Errors in minimax approximation of $x^n$ by
polynomials of degree $k$; the solid lines show the
approximation~{\rm (\ref{newriv})}.  The convergence is exponential as a 
function of $k^2/n$.  The horizontal axis is scaled quadratically, so
this behavior shows up as straight lines.}
\end{figure}

\begin{figure}
\begin{center}
\includegraphics[scale=.8]{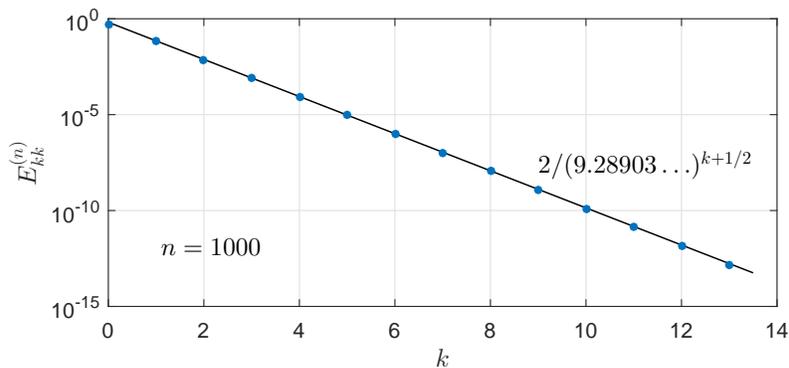}
\end{center}
\caption{\label{fig2} With rational functions of type $(k,k)$,
the convergence is much faster: exponential as a function of $k$, approximately
independently of $n$.  The solid line shows the approximation
{\rm (\ref{model})}. In this experiment $n=1000$, but the data would
be approximately the same for any other large value of $n$.}
\end{figure}

We have found that rational functions are far
more effective at approximating $x^n$ than polynomials.
To be precise, consider
approximation by real rational functions of
type $(k,k)$, that is, functions that can be written in the
form $r(x) = p(x)/q(x)$ where $p$ and $q$ are real polynomials
of degree at most $k$.  Again, standard theory shows that for
each nonnegative real number $n$ and each nonnegative integer $k$,
there exists a unique minimax approximant
$r_{kk}^{(n)}$~\cite{atap}; we denote the
error by $E_{kk}^{(n)} = \|x^n - r_{kk}^{(n)}\|$.
Here we will prove that, as illustrated in
Figure~\ref{fig2}, the errors are closely approximated by the formula
\begin{equation}
E_{kk}^{(n)} \approx 2\kern .3pt H^{k+1/2}, \quad H = 1/9.2890254919208\dots ,
\label{model}
\end{equation}
which has the remarkable property of being independent of $n$.
The number $H$, known as Halphen's constant, appears in the
problem of approximation of $\exp(x)$ for $x\in (-\infty, 0]$,
where the minimax errors are asymptotic to exactly the same
expression (\ref{model}).  Chapter 25 of~ \cite{atap} gives a
review of this famous problem of rational approximation theory,
a story that among others has involved Aptekarev, Carpenter,
Cody, Gonchar, Gutknecht, Magnus, Meinardus, Rakhmanov, Ruttan,
Trefethen, and Varga.  Halphen first identified the number now
named after him in 1886~\cite{halphen}, though not in connection
with approximation theory.

\section{Theorems}

To prove that the errors satisfy an estimate of
the form~(\ref{model}), we exploit the fact that the set of
rational functions of type $(k,k)$ is invariant under
M\"obius transformation.  In particular, we transplant
the approximation domain $[\kern .3pt 0,1]$ to $(-\infty,0\kern .3pt ]$
by the M\"obius transformation that maps
$x = 0$, $1$, and $1+1/(n-1)$ to $s = -\infty$, $0$, and $1$:
$$ x = \frac{n}{n-s}, \qquad s = \frac{n(x-1)}{x}. $$
The function $x^n$ transplants to
\begin{equation}
x^n = (n/(n-s))^n = (1-s/n)^{-n},
\end{equation}
and this establishes our first lemma.

\begin{lemma}
\label{trans}
For any real number $n>0$ and integer $k\ge 0$, the
error $E_{kk}^{(n)}$ in type $(k,k)$ minimax approximation of $x^n$
on $[\kern .3pt 0,1]$ is equal to the error in type $(k,k)$
minimax approximation of\/ $(1-s/n)^{-n}$ on $(-\infty,0\kern .3pt ]$.
\end{lemma}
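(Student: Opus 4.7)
The plan is to argue by a standard Möbius--invariance argument, with three small items to verify.

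First I would check that the given transformation $x = n/(n-s)$, $s = n(x-1)/x$ is a homeomorphism between the (extended) domains $(-\infty,0\kern .3pt]$ and $[\kern .3pt 0,1]$: it is strictly monotonic on each piece, sends $s=-\infty$ to $x=0$ and $s=0$ to $x=1$, and the identity $(n/(n-s))^n = (1-s/n)^{-n}$ is immediate by algebra, giving the stated transplant of $x^n$.

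Second, and this is the crux, I would verify that composition with a Möbius map preserves the type $(k,k)$ class. If $r(x)=p(x)/q(x)$ with $\deg p,\deg q\le k$, then substituting $x = n/(n-s)$ and multiplying numerator and denominator by $(n-s)^k$ yields $r(n/(n-s)) = \tilde p(s)/\tilde q(s)$ where $\tilde p,\tilde q$ are polynomials in $s$ of degree at most $k$; the same works in reverse using $s = n(x-1)/x$, so the composition gives a bijection of type $(k,k)$ rational functions in $x$ with type $(k,k)$ rational functions in $s$. This is the only place one needs to be careful, since in principle the degrees could drop at the boundary points; but because the transformation is a genuine Möbius map (nonzero determinant) the degree count is preserved.

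Third, since the sup norm of any continuous function on $[\kern .3pt 0,1]$ equals the sup norm of its pullback on $(-\infty,0\kern .3pt]$ (by pointwise bijection of the domains), we have
\[
\|x^n - r(x)\|_{[\kern .3pt 0,1]} = \|(1-s/n)^{-n} - \tilde r(s)\|_{(-\infty,0\kern .3pt ]}
\]
for each corresponding pair $r\leftrightarrow \tilde r$. Taking infima over the (in bijection) type $(k,k)$ classes yields the equality of minimax errors, proving the lemma. I do not foresee any real obstacle; the only subtlety worth spelling out is the degree-preservation step in the middle paragraph.
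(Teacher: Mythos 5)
Your proposal is correct and follows the same route as the paper, which simply invokes the M\"obius invariance of the type $(k,k)$ class together with the transplant $x^n = (1-s/n)^{-n}$ and treats the lemma as established by that observation. You have merely spelled out the degree-preservation and norm-preservation details that the paper leaves implicit.
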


Our second lemma quantifies the fact that $(1-s/n)^{-n}\approx e^s$
for $s\in (-\infty,0\kern .3pt]$.

\begin{lemma}
\label{lem}
For any $n\in (0,\infty)$ and $s\in (-\infty,0)$, 
\begin{equation}
0 < (1-s/n)^{-n} - e^s \le \frac{1}{e\kern .3pt n}. 
\end{equation}
\end{lemma}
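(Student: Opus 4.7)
The plan is to substitute $t=-s>0$, which transforms the claim into the assertion that
\[
0 < (1+t/n)^{-n} - e^{-t} \le \frac{1}{en}
\]
for all $t>0$ and all $n>0$. The lower bound is immediate from the standard inequality $\ln(1+u)\le u$ applied with $u=t/n$: this gives $n\ln(1+t/n)\le t$, hence $(1+t/n)^n\le e^t$, and taking reciprocals and subtracting $e^{-t}$ yields strict positivity for $t>0$.

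For the upper bound, I would study the function $g(t)=(1+t/n)^{-n}-e^{-t}$, which vanishes at $t=0$ and as $t\to\infty$ and is positive in between, so its supremum is attained at some interior critical point $t^{\ast}>0$. A direct computation gives
\[
g'(t) = e^{-t} - (1+t/n)^{-n-1},
\]
so at $t^{\ast}$ we have the key identity $e^{-t^{\ast}} = (1+t^{\ast}/n)^{-n-1}$. Substituting this into $g(t^{\ast})$ produces a clean cancellation,
\[
g(t^{\ast}) = (1+t^{\ast}/n)^{-n} - (1+t^{\ast}/n)^{-n-1} = \frac{t^{\ast}/n}{(1+t^{\ast}/n)^{n+1}} = \frac{t^{\ast}}{n\,e^{t^{\ast}}},
\]
where the last step uses the critical-point equation again to replace $(1+t^{\ast}/n)^{n+1}$ by $e^{t^{\ast}}$. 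Since $\max_{t>0} t\,e^{-t} = 1/e$, attained at $t=1$, we conclude $g(t^{\ast})\le 1/(en)$, which is the desired bound.

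The only non-routine step is the double use of the critical-point relation $e^{t^{\ast}}=(1+t^{\ast}/n)^{n+1}$ that eliminates both transcendental expressions in $g(t^{\ast})$ simultaneously. This is what makes the constant come out to exactly $1/(en)$, whereas a cruder Taylor-series estimate of the form $g(t)\approx e^{-t}t^{2}/(2n)$ would give only $2/(e^{2}n)$ at $t=2$; by contrast, the exact optimum lives at the smaller value $t^{\ast}=1$, and the sharper bound emerges automatically. Everything else is routine one-variable calculus.
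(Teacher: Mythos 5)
Your proof is correct and is essentially the paper's own argument, transplanted by the substitution $t=-s$: the paper likewise shows positivity from $(1-s/n)^n<e^{-s}$, locates the interior maximum via the critical-point identity $(1-\sigma/n)^{-(n+1)}=e^{\sigma}$, obtains $g(\sigma)=-\sigma e^{\sigma}/n$, and finishes with $\max_{\sigma<0}(-\sigma e^{\sigma})=1/e$. (One small slip in your closing commentary only: the critical point $t^{\ast}$ of $g$ depends on $n$ and is not in general equal to $1$; it is the auxiliary function $t\mapsto t\kern .5pt e^{-t}$ whose maximizer is $t=1$, and only the inequality $t^{\ast}e^{-t^{\ast}}\le 1/e$ is needed.)
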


\begin{proof}
Given $n$, define $g(s) = (1-s/n)^{-n}- e^s$,
with $g(-\infty) = g(0) = 0$.
From a binomial series we may verify
$(1-s/n)^n < e^{-s}$ for each~$s$, and taking
reciprocals establishes
$(1-s/n)^{-n} > e^{s}$, i.e., $g(s) > 0$.
The maximum value of $g(s)$ will be attained at a point
$s=\sigma$ where the derivative
\begin{displaymath}
g'(s) = (1-s/n)^{-(n+1)} - e^s
\end{displaymath}
is zero, i.e.,
$(1-\sigma/n)^{-(n+1)} = e^\sigma$.
At such a point we calculate
\begin{displaymath}
g(\sigma) = e^\sigma(1-\sigma/n)-e^\sigma = -\sigma e^\sigma\kern -1pt/n,
\end{displaymath}
and to complete the proof we note that $0 < -\sigma e^\sigma\le 1/e$ for
$\sigma \in (-\infty,0)$.
\end{proof}

We can now derive our main result.

\begin{theorem}
\label{thm1}
The errors in type $(k,k)$ rational minimax approximation
of\/ $x^n$ on $[\kern .3pt 0,1]$ satisfy
\begin{equation}
\lim_{k\to\infty}\, \lim_{n\to\infty\vphantom{k}}\, 
E_{kk}^{(n)}
\kern -3pt\left/\vrule width 0pt height 10pt depth 0pt\right.\kern -3pt
2\kern .3pt H^{k+1/2}\kern 1pt  = \kern 1pt 1, 
\label{thm1eq}
\end{equation}
where $H \approx 1/9.28903$ is Halphen's constant.
In this formula\/ $n$ may range over nonnegative real numbers or over
nonnegative integers.
\end{theorem}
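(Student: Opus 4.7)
The plan is to reduce the statement to the classical Halphen asymptotic for type $(k,k)$ minimax approximation of $e^s$ on $(-\infty,0\kern .3pt]$. Let $E_{kk}^\infty$ denote that Halphen minimax error and put $f_n(s) := (1-s/n)^{-n}$. By Lemma~\ref{trans}, $E_{kk}^{(n)}$ equals the type $(k,k)$ minimax error for $f_n$ on $(-\infty,0\kern .3pt]$, so the whole task is to show that, for each fixed $k$, this error converges to $E_{kk}^\infty$ as $n\to\infty$. Once that is established, the theorem follows by dividing by $2\kern .3pt H^{k+1/2}$ and invoking the classical asymptotic $E_{kk}^\infty \sim 2\kern .3pt H^{k+1/2}$ reviewed in Chapter~25 of~\cite{atap}, which I would take as a black-box input.

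The key step is an elementary 1-Lipschitz estimate for minimax errors: for any continuous targets $f,g$ on a domain $D$, with respective best type $(k,k)$ rational approximants $r_f^*, r_g^*$, the triangle inequality combined with the optimality of $r_f^*$ and $r_g^*$ yields
\begin{equation*}
\bigl|\,\|f - r_f^*\| - \|g - r_g^*\|\,\bigr| \le \|f - g\|.
\end{equation*}
Applying this with $f=f_n$ and $g=e^s$, and using Lemma~\ref{lem} (whose bound $1/(en)$ extends from $(-\infty,0)$ to the closed interval $(-\infty,0\kern .3pt]$ by continuity, since the difference vanishes at $s=0$ and at $s=-\infty$), gives $|E_{kk}^{(n)}-E_{kk}^\infty|\le 1/(en)$. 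Fixing $k$ and letting $n\to\infty$ along the reals, or along the integers, therefore produces $\lim_{n\to\infty} E_{kk}^{(n)}=E_{kk}^\infty$, and a further limit in $k$ finishes the proof.

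The main obstacle is conceptual rather than technical: one must remember that best rational approximation is a continuous (in fact 1-Lipschitz) operation on the target in the uniform norm, even on the unbounded interval $(-\infty,0\kern .3pt]$. This is harmless here because both $f_n$ and $e^s$ decay to $0$ at $-\infty$, so the relevant suprema are attained on compact subintervals and the comparison argument goes through uniformly in~$k$. No further approximation-theoretic machinery is needed beyond the two lemmas and the classical Halphen asymptotic.
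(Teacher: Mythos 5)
Your proposal is correct and follows essentially the same route as the paper: transplant via Lemma~\ref{trans}, compare with $e^s$ via Lemma~\ref{lem}, and invoke the Halphen/Aptekarev asymptotic for the exponential on $(-\infty,0\kern .3pt]$. The $1$-Lipschitz estimate you spell out is exactly the step the paper leaves implicit in asserting that the two lemmas give $F_{kk}=\lim_{n\to\infty}E_{kk}^{(n)}$, so your write-up is, if anything, slightly more complete.
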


\begin{proof}
Let $F_{kk}^{}$ denote the error in minimax 
type $(k,k)$ rational approximation of $e^s$ on $(-\infty,0\kern .3pt]$.
Aptekarev~\cite{aptek} established the identity
\begin{equation}
\lim_{k\to\infty}\, F_{kk}^{}
\kern -1pt\left/\vrule width 0pt height 10pt depth 0pt\right.\kern -3pt
2H^{k+1/2} \kern 1pt =\kern 1pt  1,
\label{aptekeq}
\end{equation}
which had been conjectured earlier by Magnus~\cite{magnus}.
On the other hand Lemmas~\ref{trans} and~\ref{lem} imply
\begin{equation}
F_{kk}^{} = \lim_{n\to\infty} E_{kk}^{(n)}.
\label{EandF}
\end{equation}
Equation (\ref{thm1eq}) follows from (\ref{aptekeq}) and (\ref{EandF}).
\end{proof}

Equation (\ref{thm1eq}) says little about the errors associated
with any finite value of~$n$.  Numerical data such as those
plotted in Figure~\ref{fig2} suggest that much sharper estimates
are probably valid, with $E_{kk}^{(n)}$ coming much closer to
$2\kern .3pt H^{k+1/2}$ than is shown by our arguments.

As mentioned at the outset, approximation of $x^n$ on $[\kern
.3pt 0,1]$ is equivalent to approximation of $x^{n/2}$ on $[\kern
.3pt -1,1]$ when $n$ is an even integer.  The equivalence is
spelled out in the proof of the following theorem, which uses
the same notation $E_{kk}^{(n)}$ for $[-1,1]$ as used previously
for $[\kern .3pt 0,1]$.

\begin{theorem}
\label{thm2}
The errors in type $(k,k)$ rational minimax approximation
of\/ $x^n$ on $[-1,1]$ satisfy
\begin{equation}
\lim_{k\to\infty}\,
\lim_{\vrule width 0pt height 5pt
\scriptstyle{n\to\infty}\atop{\scriptstyle{n\hbox{\scriptsize\rm~even}}}}\,
E_{kk}^{(n)} 
\kern -3pt\left/\vrule width 0pt height 10pt depth 0pt\right.\kern -3pt
2H^{\lfloor k/2\rfloor +1/2} \kern 1pt = \kern 1pt 1.
\label{thm2eq}
\end{equation}
In this formula\/ $n$ ranges over nonnegative even integers.
\end{theorem}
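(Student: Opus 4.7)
The plan is to reduce the even-$n$, $[-1,1]$ problem to the $[\,0,1]$ problem settled by Theorem~\ref{thm1} via the substitution $y=x^2$. The first step is a symmetry observation: because $x^n$ is an even function when $n$ is even, the unique minimax approximant of type $(k,k)$ must itself be even; indeed if $r$ is optimal then so is $r(-\,\cdot\,)$, and uniqueness forces $r(x)=r(-x)$.

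Second, I would establish a structural lemma identifying the class of even rational functions of type $(k,k)$ with the class of arbitrary rationals of type $(\lfloor k/2\rfloor,\lfloor k/2\rfloor)$ in the variable $y=x^2$. Writing $r=p/q$ in lowest terms and expanding $r(x)=r(-x)$ as $p(x)\,q(-x)=p(-x)\,q(x)$, coprimality forces $p(x)=\pm p(-x)$ and $q(x)=\pm q(-x)$. A short case analysis rules out odd $p$ or odd $q$ (the odd cases either introduce a common factor of $x$ or make $r$ itself odd), leaving $p(x)=P(x^2)$ and $q(x)=Q(x^2)$ with $\deg P, \deg Q\le\lfloor k/2\rfloor$. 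The converse is trivial: any such $R(y)=P(y)/Q(y)$ yields $R(x^2)$ of type $(2\lfloor k/2\rfloor, 2\lfloor k/2\rfloor)\subseteq(k,k)$. Since $y=x^2$ maps $[-1,1]$ onto $[\,0,1]$ and $x^n=y^{n/2}$, the supremum norms of $x^n-R(x^2)$ on $[-1,1]$ and of $y^{n/2}-R(y)$ on $[\,0,1]$ coincide, so
\[
E_{kk}^{(n)}\bigl([-1,1]\bigr) \;=\; E_{\lfloor k/2\rfloor,\lfloor k/2\rfloor}^{(n/2)}\bigl([\,0,1]\bigr).
\]

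Third, I would send $n\to\infty$ through even integers, so that $n/2\to\infty$ through integers, and invoke the identity~(\ref{EandF}) established in the proof of Theorem~\ref{thm1}: the right-hand side tends to $F_{\lfloor k/2\rfloor,\lfloor k/2\rfloor}$. Aptekarev's identity~(\ref{aptekeq}) applied with $k'=\lfloor k/2\rfloor\to\infty$ then produces exactly~(\ref{thm2eq}).

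The step I expect to require the most care is the structural reduction: one must verify that no additional approximation power can come from numerators or denominators with odd part, so that the correct parameter truly is $\lfloor k/2\rfloor$ rather than anything larger. Once this is in hand, the remainder of the argument is just a relabelling combined with the already-proven Theorem~\ref{thm1}.
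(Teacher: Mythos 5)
Your proposal is correct and follows essentially the same route as the paper: the evenness of the minimax approximant (forced by uniqueness), the identification of even type $(k,k)$ rationals with type $(\lfloor k/2\rfloor,\lfloor k/2\rfloor)$ rationals in $y=x^2$, and the reduction to Theorem~\ref{thm1} via (\ref{EandF}) and (\ref{aptekeq}). The only difference is presentational: the paper splits into even and odd $k$ and cites \cite{atap} for the structural facts, whereas you prove the coprimality/parity lemma directly in a single unified argument.
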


\begin{proof} 
Let $n$ be a nonnegative even integer.
If $k$ is even, then by the change of
variables $s = x^2$ (see for example p.~213
of~\cite{atap}), we find that type 
$(k,k)$ approximation of $x^n$ on $[-1,1]$
is equivalent to type $(k/2,k/2)$ approximation of $x^{n/2}$ on $[\kern
.3pt 0,1]$.  If $k$ is odd, then the uniqueness of best approximants
implies that the type 
$(k,k)$ approximant of $x^n$ on $[-1,1]$ must still be even, hence
the same as the type
$(k-1,k-1)$ approximant (see e.g.\ Exercise~24.1 of~\cite{atap}).
These observations justify the 
floor function $\lfloor k/2\rfloor$ of (\ref{thm2eq}).
\end{proof}

If $n$ is odd, the errors are approximately but not exactly
the same.

\section{Discussion}

In~\cite{atap} it is emphasized that rational approximants tend
to greatly outperform polynomials in cases where (i) the function
to be approximated has a nearby singularity or (ii) the domain of
approximation is unbounded.  Approximation of $x^n$ on $[\kern
.3pt 0,1]$ is essentially a problem of type (i), with nearly
singular behavior at $x\approx 1$ (not technically singular,
of course, but one could speak of a ``pseudo-singularity'').
It is interesting that the proof of Theorem~\ref{thm1} proceeds
by conversion to an equivalent problem of type (ii).

The $k = O(\sqrt n\kern 1pt )$ effect for polynomial
approximation of $x^n$ has practical consequences.  For example,
Chebfun's method of numerical computation with functions
depends on representing them adaptively to approximately
machine precision (${\approx} \kern 1pt 16$ digits) by
Chebyshev expansions.  Table~\ref{table1} lists the degrees $k$
of the Chebfun polynomials representing various powers $x^n$
on $[\kern .3pt 0,1]$.  We see that for small $n$, the system
requires $k=n$, but for larger values, each quadrupling of $n$
brings just approximately a doubling of $k$.

\begin{table}
\caption{\label{table1}Chebfun~\cite{chebfun} constructs a polynomial of an
adaptively determined degree $k$
to represent a function on a given interval
to about 16 digits of accuracy.  For
$x^n$ on $[\kern .3pt 0,1]$,
$k=n$ is needed for smaller values of $n$, whereas for larger
values, $k$ grows at a rate $O(\sqrt n\kern 1pt)$ consistent
with (\ref{newriv}).}
\begin{center}
\begin{tabular}{cccccccc}
$n~~$ & 1 & 4 & 16 & 64 & 256 & 1024 & 4096 \\[3pt]
$k~~$ & 1 & 4 & 16 & 44 & 91 & 178 & 349 \\ [5pt]
\end{tabular}
\end{center}
\end{table}

Another aspect of the $k = O(\sqrt n\kern 1pt )$ effect is
discussed by Cornelius Lanczos in a fascinating video recording
from 1972 available online~\cite{lanczos} (beginning at about
time 10:00); for a written discussion see chapter~5 of his
book~\cite{lbook}.  Lanczos speaks of the monomials $\{x^n\}$
as a ``tremendously nonorthogonal system,'' a fact quantified
by the M\"untz--Sz\'asz theorem~\cite{rudin}, and observes
that it was this effect that led him to invent what are now
called Chebyshev spectral methods for the numerical solution
of differential equations~\cite{boyd,lbook,atap}.  Numerical
analysts would rarely cite the M\"untz--Sz\'asz theorem,
but they are well aware that monomials provide exponentially
ill-conditioned bases on real intervals, making them nearly
useless for numerical computing, whereas suitably scaled
Chebyshev polynomials are excellent for computation because
they give well-conditioned bases.

These observations pertain to polynomial approximation of $x^n$,
whereas the new results of this paper concern the much greater
power of rational approximations.  There is some previous
literature on rational approximation of $x^n$, and an early
survey can be found in~\cite{reddy}.  The most developed part
of this problem has been the case in which $n$ is a fixed
positive number that is not an integer and $k\to\infty$.
Here one obtains root-exponential convergence with respect
to $k\kern .4pt$; see~\cite{stahl} for both sharp results and a survey
of earlier work.  The more basic phenomenon considered in
the present paper of exponential convergence for $k\to\infty$
for large $n$ seems not to have been noted previously, nor, in
particular, the connection with ${\approx}\kern 1pt 9.28903$.
Perhaps there may be applications where this too will have
practical consequences.

\bibliographystyle{amsplain}

\end{document}